\theoremstyle{plain}
\newtheorem{lemma}{Lemma}[section]
\newtheorem{theorem}[lemma]{Theorem}
\newtheorem{cor}[lemma]{Corollary}
\newtheorem{prop}[lemma]{Proposition}
\newtheorem{exam}[lemma]{\normalfont \scshape
 Example}
\newtheorem{rem}[lemma]{\normalfont \scshape Remark}
\newcommand{\R}{\mathbb{R}}
\newcommand{\N}{\mathbb{N}}
\newcommand{\norm}[1]{\left\Vert#1\right\Vert}
\newcommand{\abs}[1]{\left\vert#1\right\vert}
\newcommand{\set}[1]{\left\{#1\right\}}
\newcommand{\bfx}{\bm{x}}
\newcommand{\bfzero}{\bm{0}}
\newcommand{\bfinfty}{\bm{\infty}}
\newcommand{\bfone}{\bm{1}}
\newcommand{\bfc}{\bm{c}}
\newcommand{\bfe}{\bm{e}}
\newcommand{\bfk}{\bm{k}}
\newcommand{\bfU}{\bm{U}}
\newcommand{\bfu}{\bm{u}}
\newcommand{\bfX}{\bm{X}}
\newcommand{\bfY}{\bm{Y}}
\newcommand{\bfZ}{\bm{Z}}
\newcommand{\bfxi}{\bm{\xi}}
\begin{document}

\title[Multivariate Order Statistics]{Multivariate Order Statistics:\linebreak The Intermediate Case}
\author{Michael Falk and Florian Wisheckel}
\address{University of W\"{u}rzburg,
Institute of Mathematics,  Emil-Fischer-Str. 30, 97074 W\"{u}rzburg, Germany.}
\email{michael.falk@uni-wuerzburg.de, florian.wisheckel@uni-wuerzburg.de}

\subjclass[2010]{Primary 62G30, secondary 62H10}%
\keywords{Multivariate order statistics, intermediate order statistics, copula, domain of attraction, $D$-norm, von Mises type conditions, asymptotic normality}%


\begin{abstract}
Asymptotic normality of intermediate order statistics taken from univariate iid random variables is well-known. We generalize this result to random vectors in arbitrary dimension, where the order statistics are taken componentwise.
\end{abstract}

\maketitle

\section{Introduction}
Let $\bfX^{(1)}=\left(X_1^{(1)},\dots,X_d^{(1)}\right),\dots, \bfX^{(n)}=\left(X_1^{(n)},\dots,X_d^{(n)}\right)$ be independent copies of a random vector (rv) $\bfX=(X_1,\dots,X_d)$ that realizes in $\R^d$. By
\[
X_{1:n,i}\le X_{2:n,i}\le \dots \le X_{n:n,i}
\]
we denote the ordered values of the $i$-th components of $\bfX^{(1)},\dots,\bfX^{(n)}$, $1\le i\le d$. Then $\left(X_{j_1:n,1},\dots,X_{j_d:n,d}\right)$ with $1\le j_1,\dots,j_d\le n$, is a rv of order statistics (os) in each component. We call it a \emph{multivariate} os.

The univariate case $d=1$ is, clearly, well investigated; standard references are the books by \citet{david81}, \citet{reiss89}, \citet{gal87}, \citet{davna04}, \citet{arnbn08}, among others. In the multivariate case $d\ge 2$, the focus has been on the investigation of the rv of componentwise maxima $\left(X_{n:n,1},\dots,X_{n:n,d}\right)$ (\citet{balr77}, \citet{dehar77}, \citet{resn87}, \citet{vatan85}, \citet{beirgotese04}, \citet{dehaf06}, \citet{fahure10}, among others).

Much less is known in the extremal case $\left(X_{n-k_1:n,1},\dots,X_{n-k_d:n}\right)$ with $k_1,\dots,k_d\in\N$ fixed; one reference is \citet{gal75}. Asymptotic normality of the random vector $\left(X_{j_1:n,1},\dots,X_{j_d:n,d}\right)$ in the case of central os is established in \citet[Theorem 7.1.2]{reiss89}. In this case the indices $j_i=j_i(n)$ depend on $n$ and have to satisfy $j_i(n)/n\to_{n\to\infty}q_i\in (0,1)$, $1\le i\le d$.

In the case of intermediate os we require $j_i=j_i(n)=n-k_i$, where $k_i=k_i(n)\to_{n\to\infty}\infty$ with $k_i/n\to_{n\to\infty} 0$. Asymptotic normality of intermediate os in the univariate case under fairly general von Mises conditions was established in \citet{falk89}. \citet{balh78a} and \citet[Theorem 7.1]{balh78b} proved that for particular underlying distribution function (df) $F$, $X_{n-k+1:n}$ may have \emph{any} limiting distribution if it is suitably standardized and if the sequence $k$ is chosen appropriately.

As pointed out by \citet{smir67}, a (nondegenerate) limiting distribution of $X_{n-k+1:n}$ different from the normal one can only occur if $k$ has an \emph{exact} preassigned asymptotic behavior. Assuming only $k\to_{n\to\infty}\infty$, $k/n\to_{n\to\infty}0$, \citet{smir67} gave necessary and sufficient conditions for $F$ such that $X_{n-k+1:n}$ is asymptotically normal, and he specified the appropriate norming constants, see condition \eqref{eqn:Smirnov's_condition} below.

Smirnov's result was extended to multivariate intermediate os by \citet{chedehya97}. They identify the class of limiting distributions of $\left(X_{n-k_1:n,1},\dots,X_{n-k_d:n,d}\right)$ after suitable normalizing and centering, and gave necessary and sufficient conditions of weak convergence.

\citet{cooil85} established multivariate extensions of the univariate case by considering vectors of intermediate os $\left(X_{n-k_1+1:n},\dots,X_{n-k_d+1:n}\right)$ taken from the same sample of univariate os $X_{1:n}\le\dots\le X_{n:n}$ but with pairwise different $k_1,\dots,k_d$. \citet{barakat01} investigates the limit distribution of bivariate os in all nine possible combinations of central, intermediate and extreme os.

According to (\citet{sklar59, sklar96}), the df of $\bfX=(X_1,\dots,X_d)$ can be decomposed into a copula and the df $F_i$ of each component $X_i$, $1\le i\le d$. We will establish in this paper asymptotic normality of the vector of multivariate os $\left(X_{n-k_1:n,1},\dots,X_{n-k_d:n,d}\right)$ in the intermediate case. This is achieved under the condition that the copula corresponding to $\bfX$ is in the max-domain of attraction of a multivariate extreme value df together with the assumption that each univariate marginal df $F_i$ satisfies a von Mises condition and that the norming constants satisfy Smirnov's condition \eqref{eqn:Smirnov's_condition} below.

\section{Main Results: Copula Case}
We consider first the case that the df of the rv $\bfX$ is a copula, $C$ say, on $\R^d$. We require that $C$ is in the max-domain of attraction of a non-degenerate multivariate extreme-value df (evd) $G$, i.e.
\begin{equation}\label{eqn:max-domain-of-attraction_for_copula}
C^n\left(\bfone+\frac {\bfx} n\right)\to_{n\to\infty} G(\bfx),\qquad \bfx\in\R^d,
\end{equation}
where $\bfone=(1,\dots,1)\in\R^d$ and all operations on vectors are meant componentwise.
In this case, there exists a $D$-norm $\norm{\cdot}_D$ on $\R^d$ such that
\begin{equation}\label{eqn:representation_of_standard_evd}
G(\bfx) =\exp\left(-\norm{\bfx}_D\right),\qquad \bfx\le\bfzero\in\R^d.
\end{equation}
A common norm $\norm\cdot$ on $\R^d$ is a $D$-norm $\norm\cdot_D$, if there exists a rv $\bfZ=(Z_1,\dots,Z_d)$ on $\R^d$ with the two properties $Z_i\ge 0$, $E(Z_i)=1$ for $i=1,\dots,d$, such that
\[
\norm{\bfx}_D=E\left(\max_{1\le i\le d}\abs{x_i}Z_i\right),\qquad \bfx\in\R^d.
\]
The rv $\bfZ$ is called a generator of the $D$-norm, and we add the index $D$ to the norm symbol, meaning dependence.

Representation \eqref{eqn:representation_of_standard_evd} is just a reformulation of the Pickands-de Haan-Resnick-Vatan characterization of a multivariate evd, using $D$-norms; see, e.g. \citet[Chapter 4]{fahure10}. Examples of $D$-norms are the sup-norm $\norm{\bfx}_\infty=\max_{1\le i\le d}\abs{x_i}$ as well as the complete logistic family $\norm{\bfx}_p=\left(\sum_{i=1}^d\abs{x_i}^p\right)^{1/p}$, $p\ge 1$. For a systematic treatment of $D$-norms we refer to the booklet by Falk (2016).\footnote{\verb|http://www.statistik-mathematik.uni-wuerzburg.de/fileadmin/10040800/D-norms-tutorial_book.pdf|}

A straightforward analysis shows that \eqref{eqn:max-domain-of-attraction_for_copula} \& \eqref{eqn:representation_of_standard_evd} are equivalent with the condition that there exists a $D$-norm on $\R^d$ such that
\begin{equation}\label{eqn:D-norm_expansion_of_copula}
C(\bfu)=1-\norm{\bfone-\bfu}_D+ o\left(\norm{\bfone-\bfu}\right)
\end{equation}
as $\bfu\to\bfone$, uniformly for $\bfu\in[0,1]^d$.

We are now ready to state asymptotic normality of the vector of multivariate os in the intermediate case with underlying copula. By $\bfe_j:=(0,\dots,0,1,0,\dots,0)\in\R^d$ with denote the $j$-th unit vector, $j=1,\dots,d$.

\begin{theorem}[The Copula Case]\label{theo:main_result_for_copula}
Suppose that the rv $\bfX=(X_1,\dots,X_d)$ follows a copula $C$, which satisfies expansion \eqref{eqn:D-norm_expansion_of_copula} with some $D$-norm $\norm\cdot_D$ on $\R^d$. Let $\bfk=\bfk(n)=(k_1,\dots,k_d)\in\set{1,\dots,n-1}$, $n\in\N$, satisfy $k_i/k_j\to k_{ij}^2\in (0,\infty)$ for all pairs of components $1\le i,j\le d$, $\norm{\bfk}\to\infty$ and $\norm{\bfk}/n\to 0$ as $n\to\infty$. Then the rv of componentwise intermediate os is asymptotically normal:
\[
\left(\frac n{\sqrt {k_i}}\left(X_{n-k_i:n,i}-\frac{n-k_i}n\right)  \right)_{i=1}^d \to_D N\left(\bfzero, \Sigma\right),
\]
where the $d\times d$-covariance matrix is given by
\[
\Sigma=(\sigma_{ij})=\begin{cases}
1,&\mbox{\upshape if }i=j\\
k_{ij}+k_{ji} - \norm{k_{ij}\bfe_i+k_{ji}\bfe_j}_D,&\mbox{\upshape if }i\not=j.
\end{cases}
\]
\end{theorem}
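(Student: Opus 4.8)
The plan is to dualise the order statistics into exceedance counts and then apply a multivariate central limit theorem to a triangular array, computing the covariance of that array from the bivariate margins of $C$ via the $D$-norm expansion \eqref{eqn:D-norm_expansion_of_copula}. Fix $\bfx=(x_1,\dots,x_d)\in\R^d$ and put, for each $n$,
\[
t_i := 1-\frac{k_i-\sqrt{k_i}\,x_i}{n},\qquad v_i:=1-t_i=\frac{k_i-\sqrt{k_i}\,x_i}{n},\qquad 1\le i\le d,
\]
so that $t_i\uparrow 1$. Writing $N_{n,i}:=\#\set{1\le m\le n: X_i^{(m)}>t_i}$ and $T_i:=\frac n{\sqrt{k_i}}(X_{n-k_i:n,i}-\frac{n-k_i}n)$, the elementary equivalence $\set{X_{n-k_i:n,i}\le t_i}=\set{N_{n,i}\le k_i}$ together with the choice of $t_i$ gives $\set{T_i\le x_i}=\set{N_{n,i}\le k_i}$. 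Since $\bfX$ has uniform margins, $E(N_{n,i})=nv_i=k_i-\sqrt{k_i}\,x_i$, so the standardised count $S_{n,i}:=(N_{n,i}-nv_i)/\sqrt{k_i}$ obeys $\set{N_{n,i}\le k_i}=\set{S_{n,i}\le x_i}$ and hence $P(T_1\le x_1,\dots,T_d\le x_d)=P(S_{n,1}\le x_1,\dots,S_{n,d}\le x_d)$. It therefore suffices to show, for each fixed $\bfx$, that the array $\bfS_n=\bfS_n(\bfx):=(S_{n,1},\dots,S_{n,d})$ (which depends on $\bfx$ through the $t_i$) satisfies $\bfS_n\to_D N(\bfzero,\Sigma)$ with a limit independent of $\bfx$; evaluating the (continuous) limiting distribution function at the fixed point $\bfx$ then yields the assertion.

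The vector $\bfS_n=\sum_{m=1}^n\bfxi^{(m)}$ is a sum of iid centred random vectors $\bfxi^{(m)}=\big((\mathbf 1(X_i^{(m)}>t_i)-v_i)/\sqrt{k_i}\big)_{i=1}^d$, forming a triangular array because $t_i,k_i$ depend on $n$. Its covariance matrix $\Sigma_n=\operatorname{Cov}(\bfS_n)$ is obtained from a single summand. On the diagonal, $\operatorname{Var}(S_{n,i})=nv_i(1-v_i)/k_i\to 1$ since $nv_i=k_i-\sqrt{k_i}\,x_i\sim k_i$ and $v_i\to 0$, giving $\sigma_{ii}=1$. For $i\ne j$ the bivariate margin of $C$ obtained by setting the remaining coordinates equal to $1$ inherits expansion \eqref{eqn:D-norm_expansion_of_copula}, so inclusion--exclusion yields
\[
P(X_i>t_i,X_j>t_j)=v_i+v_j-\norm{v_i\bfe_i+v_j\bfe_j}_D+o(v_i+v_j).
\]
Because $\operatorname{Cov}(S_{n,i},S_{n,j})=\frac n{\sqrt{k_ik_j}}\big(P(X_i>t_i,X_j>t_j)-v_iv_j\big)$ and $nv_iv_j/\sqrt{k_ik_j}=O(\sqrt{k_ik_j}/n)=o(1)$ by $\norm\bfk/n\to0$, the $1$-homogeneity of $\norm\cdot_D$ gives
\[
(\Sigma_n)_{ij}=\frac{nv_i}{\sqrt{k_ik_j}}+\frac{nv_j}{\sqrt{k_ik_j}}-\norm{\frac{nv_i}{\sqrt{k_ik_j}}\bfe_i+\frac{nv_j}{\sqrt{k_ik_j}}\bfe_j}_D+o(1).
\]
Now $nv_i/\sqrt{k_ik_j}=\sqrt{k_i/k_j}-x_i/\sqrt{k_j}\to k_{ij}$ and likewise $nv_j/\sqrt{k_ik_j}\to k_{ji}$, the $\bfx$-dependence vanishing because $k_i,k_j\to\infty$; continuity of the norm then gives $(\Sigma_n)_{ij}\to k_{ij}+k_{ji}-\norm{k_{ij}\bfe_i+k_{ji}\bfe_j}_D=\sigma_{ij}$.

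Finally I would invoke the central limit theorem through the Cram\'er--Wold device: for fixed $\bfa\in\R^d$, the scalar $\bfa^\top\bfS_n=\sum_{m=1}^n\bfa^\top\bfxi^{(m)}$ is a sum of iid centred variables with variance $\bfa^\top\Sigma_n\bfa\to\bfa^\top\Sigma\bfa$. Each summand is bounded by $C/\sqrt{\min_i k_i}$, whence $\sum_{m=1}^n E\abs{\bfa^\top\bfxi^{(m)}}^3\le C\,(\bfa^\top\Sigma_n\bfa)/\sqrt{\min_i k_i}\to 0$, so Lyapunov's condition holds (the degenerate case $\bfa^\top\Sigma\bfa=0$ being trivial) and $\bfa^\top\bfS_n\to_D N(0,\bfa^\top\Sigma\bfa)$; hence $\bfS_n\to_D N(\bfzero,\Sigma)$. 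I expect the one genuinely delicate point to be the uniform control of the remainder $o(v_i+v_j)$ in the bivariate expansion along the specific threshold sequence $t_i\uparrow 1$: this is precisely what the uniformity of \eqref{eqn:D-norm_expansion_of_copula} over $[0,1]^d$ supplies, and once it is in hand the homogeneity of the $D$-norm together with the growth conditions $k_i/k_j\to k_{ij}^2$, $\norm\bfk\to\infty$ and $\norm\bfk/n\to0$ makes both the covariance limit and the Lyapunov bound fall out routinely.
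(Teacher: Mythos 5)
Your proposal is correct and follows essentially the same route as the paper's own proof: both dualise the order statistics into sums of threshold indicators, extract the limiting covariance from the bivariate margins of $C$ via the homogeneity of the $D$-norm in expansion \eqref{eqn:D-norm_expansion_of_copula}, and conclude with a Cram\'er--Wold central limit theorem for the resulting triangular array (the paper via Lindeberg, you via Lyapunov, which is equivalent here since the summands are bounded by $O(1/\sqrt{\min_i k_i})$). The only cosmetic difference is that you count exceedances $\bm 1(X_i>t_i)$ where the paper counts non-exceedances $1_{[0,t_i]}(X_i)$.
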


If, for example, $\norm{\bfx}_D=\norm{\bfx}_p=\left(\sum_{i=1}^p\abs{x_i}^p\right)^{1/p}$, $p\ge 1$, then $\sigma_{ij}= k_{ij}+k_{ji} - \left(k_{ij}^p + k_{ji}^p\right)^{1/p}$, $i\not=j$.

\begin{rem}\upshape
Note that $\sigma_{ij}=0$, $i\not=j$, if $\norm\cdot_D=\norm\cdot_1$, which is the case if the margins of $G(\bfx)=\exp(-\norm{\bfx}_D)=\prod_{i=1}^d\exp(x_i)$, $\bfx\le\bfzero\in\R^d$, are independent. Then the components of $\bfX=(X_1,\dots,X_d)$ are called tail independent. The reverse implication is true as well, i.e., the preceding result entails that the componentwise intermediate os $X_{n-k_1:n,1},\dots,X_{n-k_d:n,d}$ are asymptotically independent if, and only if, they are pairwise asymptotically independent. But this is equivalent with the condition that the $\norm{\cdot}_D=\norm{\cdot}_1$, see Section 1.3 in Falk (2016).

Note that $\sigma_{ij}\ge 0$ for each pair $i,j$, i.e., the componentwise os are asymptotically positively correlated. This follows from the usual triangular inequality, satisfied by each norm, and the fact that a $D$-norm is in general standardized, i.e., $\norm{\bfe_j}_D=1$, $1\le j\le d$.
\end{rem}

\begin{cor}\label{cor:asymptotic_normality_of_intermediate_os}
If we choose identical $k_i$ in the preceding result, i.e. $k_1=\dots=k_d=k$, then we obtain under the conditions of Theorem \ref{theo:main_result_for_copula}
\[
\frac n{\sqrt k}\left(X_{n-k:n,i}- \frac{n-k}n\right)_{i=1}^d \to_D N(\bfzero,\Sigma)
\]
with
\[
\Sigma=(\sigma_{ij})=\begin{cases}
1,&\mbox{\upshape if }i=j\\
2- \norm{\bfe_i+\bfe_j}_D,&\mbox{\upshape if }i\not=j.
\end{cases}
\]
\end{cor}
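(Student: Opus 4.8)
The plan is to derive the statement directly as the special case of Theorem~\ref{theo:main_result_for_copula} obtained by taking the index vector constant, $\bfk=\bfk(n)=(k,\dots,k)=k\bfone$. The bulk of the work is therefore only to confirm that this choice lies within the scope of the theorem and then to evaluate the limiting covariance matrix; no new asymptotic analysis is required.

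First I would verify the structural hypotheses. The conditions $\norm{\bfk}\to\infty$ and $\norm{\bfk}/n\to 0$ are immediate, since by homogeneity $\norm{\bfk}=k\norm{\bfone}$ is a fixed positive multiple of $k$, so both reduce to $k\to\infty$ and $k/n\to 0$, which are inherited from the conditions of Theorem~\ref{theo:main_result_for_copula}. The pairwise ratio condition requires one short remark: for every pair $1\le i,j\le d$ we have $k_i/k_j=k/k=1$ for all $n$, hence the limit $k_{ij}^2=1$, and since the constants are the nonnegative square roots of these limits we obtain $k_{ij}=k_{ji}=1$ for all pairs $i\not=j$. With the hypotheses met, the conclusion of Theorem~\ref{theo:main_result_for_copula} applies after substituting $k_i=k$; in particular the $i$-th coordinate $\frac n{\sqrt{k_i}}\bigl(X_{n-k_i:n,i}-\frac{n-k_i}n\bigr)$ becomes $\frac n{\sqrt k}\bigl(X_{n-k:n,i}-\frac{n-k}n\bigr)$, giving the asserted scaling and centering.

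It remains to read off $\Sigma$. The diagonal entries $\sigma_{ii}=1$ are unchanged, and inserting $k_{ij}=k_{ji}=1$ into the off-diagonal formula of Theorem~\ref{theo:main_result_for_copula} yields
\[
\sigma_{ij}=k_{ij}+k_{ji}-\norm{k_{ij}\bfe_i+k_{ji}\bfe_j}_D=2-\norm{\bfe_i+\bfe_j}_D,\qquad i\not=j,
\]
which is exactly the claimed matrix. Since the argument is a pure specialization, there is no genuine obstacle; the only point deserving care is that the common ratio $1$ forces $k_{ij}=1$ rather than $-1$, which is guaranteed by the nonnegativity built into the definition of the limiting constants $k_{ij}$.
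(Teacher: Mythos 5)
Your proof is correct and matches the paper's (implicit) argument exactly: the corollary is stated in the paper as an immediate specialization of Theorem \ref{theo:main_result_for_copula} with $k_1=\dots=k_d=k$, which forces $k_{ij}=1$ for all pairs and reduces the off-diagonal entries to $2-\norm{\bfe_i+\bfe_j}_D$. Your added care about $k_{ij}$ being the nonnegative root is a fine touch but raises no issue the paper needed to address.
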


Let $U_{1:n}\le U_{2:n}\le \dots\le U_{n:n}$ denote the os of $n$ independent and uniformly on $(0,1)$ distributed rv $U_1,\dots,U_n$. It is well-known that
\[
\left(U_{i:n}\right)_{i=1}^n =_D \left(\frac{\sum_{j=1}^i \eta_j}{\sum_{j=1}^{n+1}\eta_j}\right)_{i=1}^n,
\]
where $\eta_1,\dots,\eta_{n+1}$ are iid standard exponential rv; see, e.g., \citet[Corollary 1.6.9]{reiss89}.

Let $\xi_1,\xi_2,\dots,\xi_{2(n+1)}$ be iid standard normal distributed rv. From the fact that $\left(\xi_1^2+\xi_2^2\right)/2$ follows the standard exponential distribution on $(0,\infty)$, we thus obtain (\citet[Problem 1.17]{reiss89}) the representation
\begin{equation}\label{eqn:representation_of_os_via_normal_rv}
\left(U_{i:n}\right)_{i=1}^n =_D \left(\frac{\sum_{j=1}^{2i} \xi_j^2}{\sum_{j=1}^{2(n+1)}\xi_j^2}\right)_{i=1}^n.
\end{equation}
Corollary \ref{cor:asymptotic_normality_of_intermediate_os} now opens a way to tackle at least partially and asymptotically a multivariate extension of the above representation \eqref{eqn:representation_of_os_via_normal_rv}.

\begin{cor}
Suppose that the $d\times d$-matrix $\Lambda$ with entries
\[
\lambda_{ij}=\sigma_{ij}^{1/2}=\begin{cases}
1,&\mbox{\upshape if }i=j\\
\left(2-\norm{\bfe_i+\bfe_j}_D\right)^{1/2}, &\mbox{\upshape if } i\not=j
\end{cases}
\]
is positive semidefinite and let $\bfxi^{(1)},\bfxi^{(2)},\dots$ be independent copies of the random vector $\bfxi=(\xi_1,\dots,\xi_d)$, which follows the normal distribution $N(\bfzero,\Lambda)$ on $\R^d$.
Then we obtain under the conditions of Corollary \ref{cor:asymptotic_normality_of_intermediate_os}
\[
\sup_{\bfx\in\R^d}\abs{P\left(\left(X_{n-k:n,i}\right)_{i=1}^d\le \bfx \right) - P\left(\left(\frac{\sum_{j=1}^{2(n-k)}\xi_i^{(j)^2}}{\sum_{j=1}^{2(n+1)}\xi_i^{(j)^2}} \right)_{i=1}^d\le\bfx \right)} \to_{n\to\infty} 0.
\]
\end{cor}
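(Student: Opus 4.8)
The plan is to reduce the claim to a joint central limit theorem for the explicitly constructed vector on the right and then to invoke a multivariate P\'olya-type argument. Write $\bfV_n=(V_{n,i})_{i=1}^d$ with $V_{n,i}=\bigl(\sum_{j=1}^{2(n-k)}\xi_i^{(j)^2}\bigr)\big/\bigl(\sum_{j=1}^{2(n+1)}\xi_i^{(j)^2}\bigr)$ and $\bfX_n=(X_{n-k:n,i})_{i=1}^d$, and let $T_n(\bfy)=\frac n{\sqrt k}\bigl(\bfy-\frac{n-k}n\bfone\bigr)$, with all operations componentwise. Since $T_n$ is a coordinatewise strictly increasing affine bijection of $\R^d$, the supremum over $\bfx$ in the assertion equals $\sup_{\bfz\in\R^d}\abs{P(T_n(\bfX_n)\le\bfz)-P(T_n(\bfV_n)\le\bfz)}$. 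By Corollary \ref{cor:asymptotic_normality_of_intermediate_os} we already know $T_n(\bfX_n)\to_D N(\bfzero,\Sigma)$, so it suffices to prove $T_n(\bfV_n)\to_D N(\bfzero,\Sigma)$ and then to compare uniformly.

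For the limit law of $\bfV_n$ I would split the denominator into the two independent blocks $A_i=\sum_{j=1}^{2(n-k)}\xi_i^{(j)^2}$ and $B_i=\sum_{j=2(n-k)+1}^{2(n+1)}\xi_i^{(j)^2}$, so that $V_{n,i}=A_i/(A_i+B_i)$ with $(A_i)_{i=1}^d$ independent of $(B_i)_{i=1}^d$. The decisive computation is the covariance of the i.i.d.\ summand vectors $\bigl(\xi_1^{(j)^2},\dots,\xi_d^{(j)^2}\bigr)$: as $(\xi_i^{(j)},\xi_{i'}^{(j)})$ is centered bivariate normal with unit variances and correlation $\lambda_{ii'}$, one has $\mathrm{Var}(\xi_i^{(j)^2})=2$ and $\mathrm{Cov}(\xi_i^{(j)^2},\xi_{i'}^{(j)^2})=2\lambda_{ii'}^2=2\sigma_{ii'}$, so this covariance matrix is exactly $2\Sigma$. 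A first-order (delta-method) expansion of $f(A_i,B_i)=A_i/(A_i+B_i)$ about the mean $(2(n-k),2(k+1))$ gives
\[
V_{n,i}-\frac{2(n-k)}{2(n+1)}\approx \frac{2(k+1)}{(2(n+1))^2}\bigl(A_i-2(n-k)\bigr)-\frac{2(n-k)}{(2(n+1))^2}\bigl(B_i-2(k+1)\bigr).
\]
After multiplication by $n/\sqrt k$ the contribution of the $(A_i)$-block has standard deviation of order $\sqrt{k/n}\to0$ and is therefore asymptotically negligible, while the $(B_i)$-block is of order one. Applying the multivariate CLT to $(B_i)_{i=1}^d$, a sum of $2(k+1)\to\infty$ i.i.d.\ vectors with covariance $2\Sigma$, and collecting the scaling constants yields limiting covariance $\frac{k+1}{k}\Sigma\to\Sigma$; hence $T_n(\bfV_n)\to_D N(\bfzero,\Sigma)$. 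The gap between the natural centering $\frac{2(n-k)}{2(n+1)}$ and $\frac{n-k}n$ is $O(1/n)$ and disappears after scaling by $n/\sqrt k$.

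Both $T_n(\bfX_n)$ and $T_n(\bfV_n)$ thus converge in distribution to $N(\bfzero,\Sigma)$, whose distribution function $\Phi_\Sigma$ is continuous even when $\Sigma$ is singular: every one-dimensional margin is $N(0,1)$, and the oscillation of a multivariate distribution function is bounded by the sum of the oscillations of its margins. Convergence in distribution to a limit with continuous distribution function is uniform (the multivariate P\'olya theorem), so both $\sup_{\bfz}\abs{P(T_n(\bfX_n)\le\bfz)-\Phi_\Sigma(\bfz)}$ and its analogue for $\bfV_n$ tend to $0$, and the triangle inequality gives the assertion. I expect the main obstacle to be the rigorous delta-method step: bounding the Taylor remainder and confirming uniformly that the $(A_i)$-fluctuation is negligible while the $(B_i)$-fluctuation drives the limit, so that the single covariance identity $\mathrm{Cov}\bigl((\xi_i^{(j)^2})_{i=1}^d\bigr)=2\Sigma$ propagates to the correct limiting covariance $\Sigma$.
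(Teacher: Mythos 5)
Your proposal is correct and follows essentially the same route as the paper's own (much terser) proof: establish that $\frac n{\sqrt k}\bigl(V_{n,i}-\frac{n-k}n\bigr)_{i=1}^d\to_D N(\bfzero,\Sigma)$ via the central limit theorem and the identity $\mathrm{Cov}(X^2,Y^2)=2c^2$ for bivariate normals, then conclude by uniform convergence to the common continuous limit. Your write-up simply supplies the details the paper leaves implicit (the block decomposition, the negligibility of the $A$-block, and the P\'olya step), and the delta-method remainder you worry about is in fact an exact algebraic identity, $V_{n,i}-\frac{n-k}{n+1}=\frac{2(k+1)(A_i-2(n-k))-2(n-k)(B_i-2(k+1))}{2(n+1)(A_i+B_i)}$, handled by Slutsky.
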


Note that the univariate marginal distributions in the above result coincide due to equation \eqref{eqn:representation_of_os_via_normal_rv}. If a matrix is positive semidefinite with nonnegative entries, the matrix of the square roots of its entries is not necessarily semidefinite again. Take, for example, the $3\times 3$-matrix  with rows $1,0,a|0, 1,a|a,a,1$. This matrix is positive definite for $a=3^{-1/2}$, but not for $a=3^{-1/4}$. The matrix $\Lambda$ is positive semidefinite, if the value of $\norm{\bfe_i+\bfe_j}_D$ does not depend on the pair $i\not = j$, in which case $\Lambda$ satisfies the \emph{compound symmetry condition}.

\begin{proof}
From Corollary \ref{cor:asymptotic_normality_of_intermediate_os} we obtain that
\[
\frac n{\sqrt k}\left(X_{n-k:n,i}-\frac{n-k}n\right)_{i=1}^d \to_D N(\bfzero,\Sigma).
\]
The assertion follows, if we establish
\[
\frac n{\sqrt k}\left(\frac{\sum_{j=1}^{2(n-k)}\xi_i^{(j)^2}}{\sum_{j=1}^{2(n+1)}\xi_i^{(j)^2}} - \frac{n-k}n  \right)_{i=1}^d \to_D N(\bfzero,\Sigma)
\]
as well. But this follows from the central limit theorem and elementary arguments, using the fact that $\mbox{Cov}(X^2,Y^2)=2c^2$, if $(X,Y)$ is bivariate normal with $\mbox{Cov}(X,Y)=c$.
\end{proof}

The proof of Theorem \ref{theo:main_result_for_copula} requires a suitable multivariate central limit theorem for arrays. To ease its reference we state it explicitly here. It follows from the univariate version based on Lindeberg's condition, see, e.g., \citet{billi12}, together with the Cram\'{e}r-Wold device. Recall that all operations on vectors are meant componentwise.

\begin{lemma}[Multivariate Central Limit Theorem for Arrays]\label{lem:multivariate_central_limit_theorem}
Let $\bfX^{(1)}_n,\dots,\bfX^{(n)}_n$ be iid rv for each $n\in\N$, bounded by some constant $\bfc=(c_1,\dots,c_d)>\bfzero\in\R^d$ and with mean zero. Suppose there is a sequence $\bfc^{(n)}\in\R^d$ with $nc_i^{(n)}\to_{n\to\infty}\infty$ for $i=1,\dots,d$, such that $\mathrm{Cov}\left(\bfX_n ^{(1)}\right)=C^{(n)}\Sigma^{(n)}C^{(n)}$, $n\in\N$, where $C^{(n)}=\mathrm{diag}\left(\sqrt{\bfc^{(n)}}\right)$ and $\Sigma^{(n)}\to_{n\to\infty}\Sigma$. Then
\[
\frac 1 {\sqrt{n\bfc^{(n)}}} \sum_{i=1}^n\bfX^{(i)}_n\to_D N(\bfzero,\Sigma).
\]
\end{lemma}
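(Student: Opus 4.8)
The plan is to reduce the $d$-dimensional convergence to a one-dimensional central limit theorem by the Cram\'er--Wold device, and then to verify Lindeberg's condition, the point being that the uniform boundedness of the array is exactly what makes that condition hold. Writing $\bfS_n:=\frac1{\sqrt{n\bfc^{(n)}}}\sum_{i=1}^n\bfX_n^{(i)}$ (componentwise), I would fix an arbitrary direction $\bft=(t_1,\dots,t_d)\in\R^d$; by Cram\'er--Wold it then suffices to establish $\bft^\top\bfS_n\to_D N\bigl(0,\bft^\top\Sigma\bft\bigr)$.

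Next I would express this projection as a sum of real-valued iid summands. Putting $\bfa:=\bigl(t_j/\sqrt{n c_j^{(n)}}\bigr)_{j=1}^d$, one has $\bft^\top\bfS_n=\sum_{i=1}^n Y_n^{(i)}$ with $Y_n^{(i)}:=\bfa^\top\bfX_n^{(i)}$, and for each fixed $n$ these are iid with mean zero. The decisive computation is that $C^{(n)}\bfa=n^{-1/2}\bft$ by the very choice of $\bfa$, so that, using the assumed covariance structure and the symmetry of $C^{(n)}$,
\[
\mathrm{Var}\bigl(Y_n^{(1)}\bigr)=\bfa^\top C^{(n)}\Sigma^{(n)}C^{(n)}\bfa=\frac{1}{n}\,\bft^\top\Sigma^{(n)}\bft,
\]
and hence the total variance $s_n^2:=\mathrm{Var}\bigl(\bft^\top\bfS_n\bigr)=\bft^\top\Sigma^{(n)}\bft\to\bft^\top\Sigma\bft$. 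In other words the scaling factors $c_j^{(n)}$ cancel and the correct limiting variance appears.

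To verify Lindeberg's condition for the array $\{Y_n^{(i)}\}$ I would use boundedness directly: since $\abs{X_{n,j}^{(i)}}\le c_j$, the summands satisfy the deterministic bound $\abs{Y_n^{(1)}}\le\sum_{j=1}^d\abs{t_j}\,c_j/\sqrt{n c_j^{(n)}}=:M_n$, and the hypothesis $n c_j^{(n)}\to\infty$ for every $j$ forces $M_n\to0$. In the non-degenerate case $\bft^\top\Sigma\bft>0$ the truncation level $\eps s_n$ stays bounded away from zero while $M_n\to0$, so the event $\{\abs{Y_n^{(1)}}>\eps s_n\}$ is empty for all large $n$ and the Lindeberg sum vanishes eventually; the univariate Lindeberg--Feller theorem then gives $\bft^\top\bfS_n/s_n\to_D N(0,1)$, hence $\bft^\top\bfS_n\to_D N\bigl(0,\bft^\top\Sigma\bft\bigr)$. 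The degenerate case $\bft^\top\Sigma\bft=0$ I would handle separately, noting that then $s_n^2\to0$ and Chebyshev's inequality forces $\bft^\top\bfS_n\to0$ in probability, which is precisely convergence to $N(0,0)$. Since $\bft$ was arbitrary, Cram\'er--Wold closes the argument.

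I do not anticipate a genuine obstacle; the argument is largely bookkeeping. The two things that must be got right are the componentwise normalization --- concretely the identity $C^{(n)}\bfa=n^{-1/2}\bft$, which produces the cancellation that yields the clean limiting variance $\bft^\top\Sigma\bft$ --- and the observation that uniform boundedness combined with $n c_j^{(n)}\to\infty$ renders each individual summand uniformly negligible, which is exactly what trivializes Lindeberg's condition.
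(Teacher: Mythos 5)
Your argument is correct and is exactly the route the paper indicates (it gives no detailed proof, only the remark that the lemma ``follows from the univariate version based on Lindeberg's condition \ldots together with the Cram\'er--Wold device''). Your computation $C^{(n)}\bfa=n^{-1/2}\bft$, the resulting limiting variance $\bft^\top\Sigma\bft$, the trivialization of Lindeberg's condition via the deterministic bound $M_n\to 0$, and the separate treatment of the degenerate case $\bft^\top\Sigma\bft=0$ all check out, so your write-up is simply a complete version of the proof the authors leave to the reader.
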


\begin{proof}[Proof of Theorem \ref{theo:main_result_for_copula}]
Choose $\bfx=(x_1,\dots,x_d)\in\R^d$. Elementary arguments yield
\begin{align}\label{eqn:representation_of_probability}
&P\left(\left(\frac n{\sqrt {k_i}}\left(X_{n-k_i:n,i} - \frac{n-k_i}n \right) \right)_{i=1}^d \le\bfx \right)\nonumber\\
&= P\left(X_{n-k_i:n,i}\le \frac{\sqrt{k_i}}n x_i + \frac{n-k_i}n,\;1\le i\le d \right)\nonumber\\
&= P\left(\sum_{j=1}^n 1_{\left[0,\frac{\sqrt{k_i}}n x_i + \frac{n-k_i}n\right]}\left(X_i^{(j)}\right)\ge n-k_i,\;1\le i\le d \right)\nonumber\\
&= P\left(\left(\frac 1{\sqrt{k_i}}\sum_{j=1}^n\left(\frac{\sqrt{k_i}}n x_i + \frac{n-k_i}n - 1_{\left[0,\frac{\sqrt{k_i}}n x_i + \frac{n-k_i}n\right]}\left(X_i^{(j)}\right) \right) \right)_{i=1}^d \le\bfx \right).
\end{align}

Put now
\[
\bfY^{(n)}:= \left(Y_1^{(n)},\dots,Y_d^{(n)}\right) := \left( 1_{\left[0,\frac{\sqrt{k_i}}n x_i + \frac{n-k_i}n\right]}\left(X_i\right) \right)_{i=1}^d
\]
with values in $\set{0,1}^d$. The entries of its covariance matrix $\Sigma^{(n)}=\left(\sigma_{ij}^{(n)} \right)$ are for $i\not=j$ given by
\begin{align*}
\sigma_{ij}^{(n)} &= E\left(Y_i^{(n)}Y_j^{(n)}\right) - E\left(Y_i^{(n)}\right) E\left(Y_j^{(n)}\right)\\
&= P\left(Y_i^{(n)}=Y_j^{(n)}=1\right)- P\left(Y_i^{(n)}=1\right)P\left(Y_j^{(n)}=1\right)\\
&= P\left(X_i\le \frac{\sqrt{k_i}}n x_i + \frac{n-k_i}n,\, X_j\le \frac{\sqrt{k_j}}n x_j + \frac{n-k_j}n \right)\\
&\hspace*{2cm}- P\left(X_i\le \frac{\sqrt{k_i}}n x_i + \frac{n-k_i}n\right)  P\left(X_j\le \frac{\sqrt{k_j}}n x_j + \frac{n-k_j}n\right)\\
&= C_{ij}\left(\frac{\sqrt{k_i}}n x_i + \frac{n-k_i}n,\,\frac{\sqrt{k_j}}n x_j + \frac{n-k_j}n \right)\\
&\hspace*{2cm}- \left( \frac{\sqrt{k_i}}n x_i + \frac{n-k_i}n\right) \left( \frac{\sqrt{k_j}}n x_j + \frac{n-k_j}n\right)
\end{align*}
if $n$ is large, where
\[
C_{ij}(u,v):= C\left(u\bfe_i + v\bfe_j + \sum_{1\le m\le d,\,m\not= i,j}\bfe_m\right),\qquad u,v\in[0,1].
\]
Expansion \eqref{eqn:D-norm_expansion_of_copula} now implies in case $i\not=j$
\begin{align*}
\sigma_{ij}^{(n)} &= 1 - \norm{\left(\frac{k_i}n - \frac{\sqrt{k_i}}n x_i \right)\bfe_i + \left(\frac{k_j}n - \frac{\sqrt{k_j}}n x_j \right)\bfe_j }_D + o\left(\frac{\sqrt{k_ik_j}}n\right)\\
&\hspace*{2cm}- \left(\frac{\sqrt{k_i}}n x_i - \frac{k_i}n + 1\right) \left(\frac{\sqrt{k_j}}n x_j - \frac{k_j}n + 1\right)\\
&= - \norm{\left(\frac{k_i}n - \frac{\sqrt{k_i}}n x_i \right)\bfe_i + \left(\frac{k_j}n - \frac{\sqrt{k_j}}n x_j \right)\bfe_j }_D + \frac{k_i+k_j}n + o\left(\frac{\sqrt{k_ik_j}}n\right)\\
&= \frac{\sqrt{k_ik_j}}n \left(k_{ij}+k_{ji}- \norm{k_{ij}\bfe_i+k_{ji}\bfe_j}_D + o(1) \right).
\end{align*}

For $i=j$ one deduces
\[
\sigma_{ii}^{(n)} = \frac{k_i}n (1+o(1)).
\]

The asymptotic normality $N(\bfzero,\Sigma)(-\bfinfty,\bfx]$ of the final term in equation \eqref{eqn:representation_of_probability} now follows from Lemma \ref{lem:multivariate_central_limit_theorem}.

\end{proof}

\section{Main Results: General Case}
Let $F$ be a df on $\R^d$ with univariate margins $F_1,\dots,F_d$. From Sklar's theorem (\citet{sklar59, sklar96}) we know that there exists a copula $C$ on $\R^d$ such that $F(\bfx)= C(F_1(x_1),\dots,F_d(x_d))$ for each $\bfx=(x_1,\dots,x_d)\in\R^d$.

Let $\bfX^{(1)},\bfX^{(2)},\dots$ be independent copies of the random vector $\bfX$, which follows this df $F$. We can assume the representation
\[
\bfX=\left(F_1^{-1}(U_1),\dots,F_d^{-1}(U_d)\right),
\]
where $\bfU=(U_1,\dots,U_d)$ follows the copula $C$ and $F_i^{-1}(u):=\inf\set{t\in\R:\,F_i(t)\ge u}$, $u\in(0,1)$, is the generalized inverse of $F_i$, $1\le i\le d$. Equally, we can assume the representation
\[
\bfX^{(j)}=\left(F_1^{-1}\left(U_1^{(j)}\right),\dots, F_d^{(-1)}\left(U_d^{(j)}\right)\right),\qquad j=1,2,\dots
\]
where $\bfU^{(1)},\bfU^{(2)},\ldots$ are independent copies of $\bfU$.

Put $\omega(F_i):=\sup\set{x\in\R:\,F_i(x)< 1}\in(-\infty,\infty]$, the upper endpoint of the support of $F_i$, and suppose that the derivative $F_i'=f_i$ exists and is positive throughout some left neighborhood of $\omega(F_i)$.  Let $k_i=k_i(n)\in\set{1,\dots,n}$ satisfy $k_i\to_{n\to\infty}\infty$, $k_i/n\to_{n\to\infty}0$.  It follows from \citet[Theorem 2.1]{falk89} that under appropriate von Mises type conditions on $F_i$ stated below
\[
\frac{X_{n-k_i+1:n,i}-d_{ni}}{c_{ni}}\to_D N(0,1)
\]
for any sequences $c_{ni}>0$, $d_{ni}\in\R$, which satisfy
\begin{equation}\label{eqn:condition_on_norming_constants}
\lim_{n\to\infty}\frac{c_{ni}}{a_{ni}}=1\quad\mbox{ and }\quad \lim_{n\to\infty}\frac{d_{ni}-b_{ni}}{a_{ni}}=0,
\end{equation}
where
\[
b_{ni}:=F_i^{-1}\left(1-\frac{k_i}n\right),\quad a_{ni}:= \frac{k_i^{1/2}}{nf_i(b_{ni})},\qquad 1\le i\le d.
\]

Theorem 1 of \citet{smir67} shows that the distribution of $c_n^{-1}(X_{n-k_i+1:n}-d_n)$ converges weakly to $N(0,1)$ for \emph{some} choice of constants $c_n>0$, $d_n\in\R$, if and only if for any $x\in\R$
\begin{equation}\label{eqn:Smirnov's_condition}
\lim_{n\to\infty} \frac{k_i+n(F_i(c_nx+d_n)-1)}{k_i^{1/2}} = x.
\end{equation}

Next we state the three von Mises type conditions, under which we have asymptotic normality for intermediate multivariate os in the general case:

\noindent
$\omega(F_i)\in(-\infty,\infty]$ and
\begin{equation}\label{cond:von_Mises_1}
\lim_{x\uparrow \omega(F_i)} \frac{f_i(x) \int_x^{\omega(F_i)}1-F_i(t)\,dt} {(1-F_i(x))^2} =1,\tag{von Mises (1)}
\end{equation}\label{cond:von_Mises_2}
$\omega(F_i)=\infty$ and there exists $\alpha_i >0$ such that
\begin{equation}
\lim_{x\to\infty} \frac{xf_i(x)}{1-F_i(x)}=\alpha_i, \tag{von Mises (2)}
\end{equation}
$\omega<\infty$ and there exists $\alpha>0$ such that
\begin{equation}\label{cond:von Mises_3}
\lim_{x\uparrow \omega(F_i)} \frac{(\omega(F_i)-x)f_i(x)}{1-F_i(x)} =\alpha_i. \tag{von Mises (3)}
\end{equation}
The standard normal df as well as the df of the standard exponential df satisfy condition (1); the standard Pareto df $F_\alpha(x)$, $x\ge 1$, $\alpha>0$, satisfies condition (2) and the triangular df on $(-1,1)$ with density $f(x)=1-\abs x$, $x\in (-1,1)$, satisfies condition (3) with $\alpha=2$, for example.
For a discussion of these well studied and general conditions, each of which ensures that $F_i$ is in the domain of attraction of a univariate EVD, see, e.g. \citet{falk89}.

The following generalization of Theorem \ref{theo:main_result_for_copula} can now easily be established.

\begin{prop}
Suppose that the copula $C$ of $F$ satisfies condition \eqref{eqn:D-norm_expansion_of_copula}, i.e., $C$ is in the max-domain of attraction of a nondegenerate multivariate EVD, and suppose that each univariate margin $F_i$ of $F$ satisfies one of the von Mises type conditions (1), (2) or (3).

Let $\bfk=\bfk^{(n)}\in\set{1,\dots,n}^d$, $n\in\N$ satisfy $k_i/k_j\to_{n\to\infty}k_{ij}^2$ for all pairs of components $i,j=1,\dots,d$, $\norm{\bfk}\to_{n\to\infty}\infty$ and $\norm{\bfk}/n\to_{n\to\infty}0.$ Then the vector of intermediate multivariate os satisfies
\[
\left(\frac{X_{n-k_i+1:n,i}-d_{ni}}{c_{ni}}\right)_{i=1}^d \to_D N\left(\bfzero, \bm\Sigma\right)
\]
with $\bm\Sigma$ as in Theorem \ref{theo:main_result_for_copula} for any sequences $c_{ni}>0$, $d_{ni}\in\R$ which satisfy \eqref{eqn:condition_on_norming_constants}.
\end{prop}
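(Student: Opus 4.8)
The plan is to reduce everything to the already-established copula case, Theorem \ref{theo:main_result_for_copula}, by means of the quantile transformation, exploiting that the marginal transforms $F_i^{-1}$ are monotone and therefore commute with the formation of componentwise order statistics. Using the representation $\bfX^{(j)}=\left(F_1^{-1}\left(U_1^{(j)}\right),\dots,F_d^{-1}\left(U_d^{(j)}\right)\right)$, where $\bfU^{(1)},\bfU^{(2)},\dots$ are independent copies of a rv $\bfU$ following the copula $C$, monotonicity of $F_i^{-1}$ yields $X_{n-k_i+1:n,i}=F_i^{-1}\left(U_{n-k_i+1:n,i}\right)$ for each $i$. The crucial point is that the entire dependence structure of the vector of order statistics is inherited from the single driving vector $\bfU$, hence from $C$ alone; the marginal transforms act on each coordinate separately and therefore only reshape location and scale within a component.

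First I would record the univariate linearization supplied by the von Mises conditions. The proof of \citet[Theorem 2.1]{falk89} establishes not merely the distributional convergence quoted above but the underlying stochastic approximation
\[
\frac{X_{n-k_i+1:n,i}-b_{ni}}{a_{ni}} = \frac{n}{\sqrt{k_i}}\left(U_{n-k_i+1:n,i}-\frac{n-k_i}n\right) + o_P(1),\qquad 1\le i\le d,
\]
which is exactly the first-order Taylor expansion of $F_i^{-1}$ about the point $1-k_i/n$: since $\left(F_i^{-1}\right)'(1-k_i/n)=1/f_i(b_{ni})$ and $a_{ni}=k_i^{1/2}/(nf_i(b_{ni}))$, the leading coefficient collapses to $n/\sqrt{k_i}$ and the intercept to $b_{ni}$. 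Here the relevant von Mises type condition is precisely what guarantees that $1/f_i$, evaluated along the quantile function, varies slowly enough over the window of width $O(\sqrt{k_i}/n)$ in which $U_{n-k_i+1:n,i}$ fluctuates around $1-k_i/n$, so that the remainder of the expansion is negligible in probability.

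Next I would invoke Theorem \ref{theo:main_result_for_copula} for the copula $C$, which by hypothesis satisfies expansion \eqref{eqn:D-norm_expansion_of_copula}, to obtain the joint convergence
\[
\left(\frac{n}{\sqrt{k_i}}\left(U_{n-k_i:n,i}-\frac{n-k_i}n\right)\right)_{i=1}^d \to_D N\left(\bfzero,\bm\Sigma\right).
\]
The one-step index shift between $U_{n-k_i+1:n,i}$ and $U_{n-k_i:n,i}$ is harmless: the intervening uniform spacing is $O_P(1/n)$, so after multiplication by $n/\sqrt{k_i}$ it contributes only $O_P(1/\sqrt{k_i})=o_P(1)$ in each component, and $k_i\to\infty$ for every $i$ because $\norm{\bfk}\to\infty$ together with $k_i/k_j\to k_{ij}^2\in(0,\infty)$. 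Combining this joint convergence of the uniform vector with the componentwise approximation of the preceding paragraph, the multivariate Slutsky theorem transfers the limit to the vector $\left(\left(X_{n-k_i+1:n,i}-b_{ni}\right)/a_{ni}\right)_{i=1}^d$, again with covariance $\bm\Sigma$. Finally, the norming-constant condition \eqref{eqn:condition_on_norming_constants}, namely $c_{ni}/a_{ni}\to 1$ and $(d_{ni}-b_{ni})/a_{ni}\to 0$, permits replacing $(a_{ni},b_{ni})$ by $(c_{ni},d_{ni})$ coordinatewise by one further application of Slutsky's theorem, which yields the assertion.

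I expect the main obstacle to be the rigorous justification of the componentwise stochastic approximation, i.e. controlling the remainder in the Taylor expansion of $F_i^{-1}$. This is where all three von Mises conditions are genuinely used: each of them forces $F_i$ into a univariate max-domain of attraction with a sufficiently regular density, equivalently renders the derivative of the quantile function regularly varying near $\omega(F_i)$, so that it is essentially constant over the shrinking window relevant to the intermediate order statistic. Since this is precisely the analysis carried out in \citet{falk89}, the cleanest route is to extract the approximation from that proof rather than to reproduce the case distinction between the three conditions; everything beyond it is bookkeeping and appeals to Slutsky's theorem.
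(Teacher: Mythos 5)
Your proposal is correct in substance and starts from the same reduction as the paper (write $X_{n-k_i+1:n,i}=F_i^{-1}(U_{n-k_i+1:n,i})$ and feed the uniform order statistics into Theorem \ref{theo:main_result_for_copula}), but the marginal step is executed differently. The paper simply rewrites the probability: the event $\set{F_i^{-1}(U_{n-k_i+1:n,i})\le c_{ni}x_i+d_{ni}}$ equals $\set{U_{n-k_i+1:n,i}\le F_i(c_{ni}x_i+d_{ni})}$, so after standardization the threshold becomes $\bigl(k_i+n(F_i(c_{ni}x_i+d_{ni})-1)\bigr)/k_i^{1/2}$, which converges to $x_i$ by Smirnov's condition \eqref{eqn:Smirnov's_condition}; that condition is available because the von Mises hypotheses give asymptotic normality with the constants $a_{ni},b_{ni}$ via \citet{falk89} and \eqref{eqn:condition_on_norming_constants} transfers it to $c_{ni},d_{ni}$. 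You instead promote this to a random-variable-level linearization $\bigl(X_{n-k_i+1:n,i}-b_{ni}\bigr)/a_{ni}=\tfrac{n}{\sqrt{k_i}}\bigl(U_{n-k_i+1:n,i}-\tfrac{n-k_i}{n}\bigr)+o_P(1)$ and finish with Slutsky. The two are essentially equivalent: your $o_P(1)$ statement is exactly the stochastic form of Smirnov's condition, and it does follow from it (the functions $x\mapsto\bigl(k_i+n(F_i(a_{ni}x+b_{ni})-1)\bigr)/k_i^{1/2}$ are monotone and converge pointwise to the identity, hence locally uniformly, and the standardized uniform order statistics are tight), though you attribute it to the proof of \citet[Theorem 2.1]{falk89} rather than deriving it, so strictly speaking you are importing a slightly stronger statement than the distributional convergence the paper quotes. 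The paper's route is shorter and avoids any appeal to Slutsky; yours has the merit of making explicit two points the paper passes over silently, namely that the approximation error from replacing $F_i^{-1}$ by its linearization must be controlled uniformly over the $O(\sqrt{k_i}/n)$ window, and that the index shift between $U_{n-k_i+1:n,i}$ and the $U_{n-k_i:n,i}$ appearing in Theorem \ref{theo:main_result_for_copula} contributes only $O_P(1/\sqrt{k_i})=o_P(1)$ after standardization.
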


\begin{proof}
We have for $\bfx=(x_1,\dots,x_d)\in\R^d$
\begin{align*}
&P\left(\frac{X_{n-k_i+1:n,i}-d_{ni}}{c_{ni}}\le x_i,\,1\le i\le d\right)\\
&= P\left(F_i^{-1}\left(U_{n-k_i+1:n,i}\right)\le c_{ni}x_i + d_{ni},\,1\le i\le d\right)\\
&= P\left(\frac n{k_i^{1/2}}\left(U_{n-k_i+1:n,i} - \frac{n-k_i}n \right)\le \frac{k_i + n\left(F_i\left(c_{ni}x_i+ d_{ni}\right)-1\right)}{k_i^{1/2}} \right).
\end{align*}
The assertion is now immediate from Theorem \ref{theo:main_result_for_copula} and Smirnov's condition \eqref{eqn:Smirnov's_condition}.
\end{proof}

\bibliographystyle{enbib_arXiv}
\bibliography{evt}

\end{document}